\newtheorem{prethm}{{\bf Theorem}}
\newenvironment{thm}{\begin{prethm}{\hspace{-0.5
               em}{\bf.}}}{\end{prethm}}
\newtheorem{prepro}[prethm]{Proposition}
\newenvironment{pro}{\begin{prepro}{\hspace{-0.5
               em}{\bf.}}}{\end{prepro}}
\newtheorem{preconj}[prethm]{Conjecture}
\newenvironment{conj}{\begin{preconj}{\hspace{-0.5
               em}{\bf.}}}{\end{preconj}}
\newtheorem{prelem}[prethm]{Lemma}
\newenvironment{lem}{\begin{prelem}{\hspace{-0.5
               em}{\bf.}}}{\end{prelem}}
\newtheorem{precor}[prethm]{Corollary}
\newtheorem{prerem}[prethm]{{\bf Remark}}
\newtheorem{preexample}{{\bf Example}}
\newtheorem{preproof}{{\bf Proof.}}
\newenvironment{proof}[1]{\begin{preproof}{\rm
               #1}\hfill{$\Box$}}{\end{preproof}}
\newcommand{\e}{\epsilon}
\newcommand{\al}{\alpha}
\renewcommand{\thefootnote}
\title{On two conjectures on sum of the powers of signless Laplacian eigenvalues of a graph}
\author{ F. Ashraf \vspace{.4cm}\\
{\sl Department of Mathematical Sciences, Isfahan University of Technology,}\\
{\sl Isfahan, 84156-83111, Iran}\\
{\sl School of Mathematics, Institute
for Research in Fundamental Sciences (IPM),} \\
 {\sl  P.O. Box 19395-5746, Tehran, Iran}\\
}
\date{}
\begin{document}
\maketitle
\footnotetext{ E-mail Address: {\tt firouzeh\_ashraf@yahoo.com}}

\begin{abstract}
Let $G$ be a simple graph and $Q(G)$ be
  the signless Laplacian matrix of $G$. Let $S_\al(G)$ be the sum of the $\al$-th powers of the nonzero eigenvalues of $Q(G)$.
We disprove two conjectures by You and Yang on the extremal values of $S_\al(G)$ among bipartite graphs and among
graphs with bounded connectivity.

\vspace{3mm}
\noindent {\em AMS Classification}: 05C50\\
\noindent{\em Keywords}: Signless Laplacian eigenvalues of graph
\end{abstract}

\section{Introduction}

Let $G$ be a simple graph with vertex set $V(G) = \{v_1,\ldots, v_n\}$. The degree of a vertex $v\in V(G)$,
denoted by $d(v)$, is the number of neighbors of $v$.
The {\em adjacency matrix} of $G$ is an $n\times n$ matrix $A(G)$ whose $(i,j)$ entry is 1 if $v_i$ and $v_j$ are adjacent and zero otherwise.
The {\em signless Laplacian matrix} of $G$ is the matrx
$Q(G) =A(G)+D(G)$,
where $D(G)$ is the diagonal matrix with $d(v_1),\ldots, d(v_n)$ on its main diagonal.
It is well-known that $Q(G)$ are positive semidefinite
and so its eigenvalues are nonnegative real numbers. The  multiplicity of zero eigenvalue for $Q(S)$ is equal to the number of bipartite connected components of $G$.
The eigenvalues of  $Q(G)$ are called the
 {\em signless Laplacian eigenvalues} of $G$ and are denoted by 
$q_1(G),\ldots,q_n(G)$. We drop $G$ from the notation when there is no danger of confusion.
We denote the complete graph on $n$ vertices by $K_n$ and the complete bipartite graph with parts with $r$ and $s$ vertices by $K_{r,s}$.
The (vertex) connectivity $\kappa(G)$  of a connected graph $G$ is the minimum number of vertices of $G$ whose deletion disconnects
$G$. It is conventional to define $\kappa(K_n) = n-1$. For two graphs $G$ and $H$, the {\em join} of them denoted by $G\vee H$ is the graph obtained from  disjoint union of $G$ and $H$ by adding edges joining every vertex
of $G$ to every vertex of $H$. We also denote the number of edges of $G$ by $e(G)$.

For a graph $G$, let $q_1(G),\ldots,q_r(G)$ be all the nonzero signless Laplacian eigenvalues of $G$.
  You and Yang \cite{yy} studied the parameter
$$S_\al(G):=q_1(G)^\al+\cdots+q_r(G)^\al.$$
Among other things, they obtained the following two results.
\begin{thm}\label{1} {\rm(\cite{yy})}
Let $G$ be a connected bipartite graph with $n$ vertices and $\alpha\leq1$.
\begin{itemize}
  \item[\rm(i)]If $\alpha<0$, then $S_{\alpha}(G)\geq n^\alpha+\left(\lfloor n/2\rfloor-1\right)\lceil n/2\rceil^\alpha+(\lceil n/2\rceil-1)\lfloor n/2\rfloor^\alpha,$
with equality if and only if $G=K_{\lfloor n/2\rfloor,\lceil n/2\rceil}$.
\item[\rm(ii)] If  $0<\alpha\leq1$, then $S_{\alpha}(G)\leq n^\alpha+\left(\lfloor n/2\rfloor-1\right)\lceil n/2\rceil^\alpha+(\lceil n/2\rceil-1)\lfloor n/2\rfloor^\alpha,$
with equality if and only if $G=K_{\lfloor n/2\rfloor,\lceil n/2\rceil}$.
\end{itemize}
\end{thm}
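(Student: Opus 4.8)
The plan is to attack Theorem~\ref{1} in two stages: first reduce the extremal problem among all connected bipartite graphs to complete bipartite graphs, then optimise over the two part sizes. I will use two standard facts: a connected bipartite graph has exactly one zero signless Laplacian eigenvalue, so $S_\alpha(G)=\sum_{i=1}^{n-1}q_i(G)^\alpha$ with $q_1(G)\ge\cdots\ge q_{n-1}(G)>0$; and for a bipartite graph $Q(G)$ and the Laplacian $D(G)-A(G)$ have the same spectrum, so I may compute with Laplacian eigenvalues. In particular the well-known Laplacian spectrum of $K_{r,n-r}$ is $\{0,\, r^{(n-r-1)},\, (n-r)^{(r-1)},\, n\}$.

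\emph{Stage 1 (reduction to complete bipartite graphs).} A connected bipartite graph $G$ has a unique bipartition $V(G)=X\cup Y$, hence is a spanning subgraph of $K_{|X|,|Y|}$, which is again connected and bipartite. I would check that adding one edge $uv$ with $u\in X,\ v\in Y$ replaces $Q(G)$ by $Q(G)+(e_u+e_v)(e_u+e_v)^{\top}$, a positive semidefinite rank-one perturbation, so by Weyl's inequality no $q_i$ decreases; since the trace increases by $2$, at least one $q_i$ strictly increases, and the new graph still has exactly $n-1$ positive eigenvalues. As $t\mapsto t^\alpha$ is strictly increasing on $(0,\infty)$ when $0<\alpha\le1$ and strictly decreasing when $\alpha<0$, such an edge addition makes $S_\alpha$ strictly larger in case (ii) and strictly smaller in case (i). Iterating up to $K_{|X|,|Y|}$, I conclude that over connected bipartite graphs on $n$ vertices $S_\alpha$ is maximised (case (ii)), resp.\ minimised (case (i)), by some complete bipartite graph, and that a graph which is not complete bipartite is strictly beaten; this also disposes of the equality statements once the correct $K_{r,n-r}$ is identified.

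\emph{Stage 2 (optimising the part size).} From the spectrum above,
$$f(r):=S_\alpha(K_{r,n-r})=n^\alpha+(n-r-1)\,r^\alpha+(r-1)\,(n-r)^\alpha,\qquad 1\le r\le n-1,$$
which is symmetric about $n/2$, namely $f(r)=f(n-r)$. I would view $f$ as a $C^2$ function on $[1,n-1]$ and show it is strictly concave when $0<\alpha\le1$ and strictly convex when $\alpha<0$. Rewriting $f(r)-n^\alpha=(n-1)\bigl(r^\alpha+(n-r)^\alpha\bigr)-\bigl(r^{\alpha+1}+(n-r)^{\alpha+1}\bigr)$, differentiating twice and substituting $r^{\alpha-1}=r\cdot r^{\alpha-2}$ (and likewise for $n-r$) gives
$$f''(r)=\alpha\Bigl[r^{\alpha-2}\,\psi(r)+(n-r)^{\alpha-2}\,\psi(n-r)\Bigr],\qquad \psi(t):=(n-1)(\alpha-1)-(\alpha+1)\,t.$$
Here $\psi$ is affine and, for every $\alpha\le1$ and $n\ge2$, satisfies $\psi(1)=\alpha(n-2)-n\le-2<0$ and $\psi(n-1)=-2(n-1)<0$; hence $\psi<0$ on all of $[1,n-1]$. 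Since $r$ and $n-r$ lie in $[1,n-1]$ and $r^{\alpha-2},(n-r)^{\alpha-2}>0$, the bracket is negative, so $f''$ has the sign of $-\alpha$, establishing the stated concavity/convexity. Combined with $f(r)=f(n-r)$, strict concavity (resp.\ strict convexity) forces $f$ to be strictly increasing then strictly decreasing (resp.\ strictly decreasing then strictly increasing) across $n/2$, so over the integers $r\in\{1,\dots,n-1\}$ the maximum (case (ii)), resp.\ the minimum (case (i)), is attained exactly at $r\in\{\lfloor n/2\rfloor,\lceil n/2\rceil\}$, that is, exactly at $G=K_{\lfloor n/2\rfloor,\lceil n/2\rceil}$, whose $S_\alpha$ is precisely the right-hand side in the theorem. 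Putting the two stages together yields both inequalities together with their equality cases (the cases $n\le2$ being trivial).

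I expect the only genuine obstacle to be controlling the sign of $f''$ uniformly over the whole range $\alpha\le1$ — especially for strongly negative $\alpha$, where the $r^\alpha$ part and the $r^{\alpha+1}$ part of $f$ push $f''$ in opposite directions. The point I would emphasise is that the difficulty disappears once $f''$ is put in the displayed factored form in terms of the affine function $\psi$: then everything reduces to the two endpoint evaluations $\psi(1)<0$ and $\psi(n-1)<0$, with no case analysis in $\alpha$ at all.
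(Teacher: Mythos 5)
Your proposal is mathematically sound, but note first that the paper itself gives no proof of Theorem~\ref{1}: it is quoted from \cite{yy} as known background, so there is no in-paper argument to compare against line by line. What can be compared is your strategy versus the one the paper uses for its own closely related results (Theorems~5 and~6), and they coincide in outline: reduce to complete bipartite graphs via the edge--interlacing Lemma~\ref{inter} (you phrase this as a rank-one positive semidefinite perturbation plus Weyl, which is the same mechanism and additionally gives you the strictness needed for the equality cases), then optimise $S_\alpha(K_{r,n-r})$ over $r$ by single-variable calculus using the spectrum in Lemma~\ref{QK_n}(ii). Your Stage~2 is where you genuinely diverge from the paper's style: for $1<\alpha\le 3$ the paper works directly with $g(x)=(x-1)(n-x)^\alpha+(n-x-1)x^\alpha$ and splits into the cases $1<\alpha\le2$ and $2<\alpha\le3$ with separate monotonicity arguments, whereas your rewriting $f(r)-n^\alpha=(n-1)\bigl(r^\alpha+(n-r)^\alpha\bigr)-\bigl(r^{\alpha+1}+(n-r)^{\alpha+1}\bigr)$ factors $f''$ through the affine function $\psi(t)=(n-1)(\alpha-1)-(\alpha+1)t$, and the two endpoint checks $\psi(1)=\alpha(n-2)-n\le-2$ and $\psi(n-1)=-2(n-1)$ settle the sign of $f''$ uniformly for all $\alpha\le1$ with no case analysis; I verified the algebra ($g''(t)=\alpha t^{\alpha-2}\psi(t)$ for $g(t)=(n-1)t^\alpha-t^{\alpha+1}$) and the endpoint evaluations, and they are correct. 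Combined with the symmetry $f(r)=f(n-r)$ and strict concavity/convexity, this does pin the extremum (and its uniqueness over integer $r$) at $\lfloor n/2\rfloor$, so both inequalities and both equality characterisations follow. The one thing worth making explicit is that in Stage~1 the strictly increased eigenvalue is necessarily one of the $n-1$ positive ones (the zero eigenvalue is fixed at $0$ before and after because the graph stays connected and bipartite), which is exactly what licenses the strict change in $S_\alpha$; you state this, and it is correct.
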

\begin{thm}\label{2} {\rm(\cite{yy})}
Let $G$ be a connected graph with $n$ vertices and $\kappa(G)\le k$ and $\alpha\geq1.$ Then
$S_\alpha(G)\leq b_\alpha(n,k)$
where
\begin{align*}
  b_\al(n,k)&=k(n-2)^\alpha+(n-k-2)(n-3)^\alpha+\left(n-2+\frac{k}{2}+\frac{1}{2}\sqrt{(k-2n)^2+16(k-n+1)}\right)^\alpha\\
  &~~~~~+\left(n-2+\frac{k}{2}-\frac{1}{2}\sqrt{(k-2n)^2+16(k-n+1)}\right)^\alpha.
\end{align*}
 The equality  holds if and only if $G=K_k\vee(K_1\cup K_{n-k-1}).$
\end{thm}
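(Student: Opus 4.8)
The plan is to combine three ingredients: a monotonicity reduction that cuts the problem down to a one–parameter family of graphs, an exact computation of the signless Laplacian spectrum of those graphs, and a short convexity argument.

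\emph{Reduction.} The key fact is that $S_\alpha$ is nondecreasing under edge addition when $\alpha\ge 1$: if $H$ is a spanning supergraph of $G$, then $Q(H)-Q(G)=\sum(e_u+e_v)(e_u+e_v)^{\top}$ (one term per added edge), so $Q(H)\succeq Q(G)$, hence $q_i(H)\ge q_i(G)$ for every $i$ by Weyl's inequality and $S_\alpha(H)\ge S_\alpha(G)$; moreover $\sum_i q_i(\cdot)=2e(\cdot)$ changes as soon as an edge is added, so some $q_i$ strictly increases and in fact $S_\alpha(H)>S_\alpha(G)$. Now suppose $G$ maximises $S_\alpha$ among connected graphs on $n$ vertices with $\kappa(G)\le k$. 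Take a minimum vertex cut $S$, $|S|=\kappa(G)=:j\le k$, and let $C_1,\dots,C_t$ ($t\ge 2$) be the components of $G-S$. If $G$ lacks some edge inside $S$, inside some $C_i$, or between $S$ and some $C_i$, we may add it; if $t\ge 3$ we may merge $C_1\cup C_2$ into a single clique; and if $t=2$ but $j<k$ we may move one vertex of the larger $C_i$ into $S$ and join it to the remaining component. In each case the result is still connected, still has connectivity at most $k$, and has strictly more edges, contradicting maximality unless no such move exists. Hence $G=K_k\vee(K_a\cup K_b)=:H_{a,b}$ with $a+b=n-k$ and $a,b\ge 1$, and it remains to optimise over $a$.

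\emph{Spectrum of $H_{a,b}$.} Partition $V(H_{a,b})$ into the cliques $S,A,B$ of sizes $k,a,b$. Any vector that sums to zero on one of these classes and vanishes on the other two is an eigenvector of $Q(H_{a,b})$; this produces the eigenvalue $n-2$ with multiplicity $k-1$, the eigenvalue $k+a-2$ with multiplicity $a-1$, and the eigenvalue $k+b-2$ with multiplicity $b-1$. The three remaining eigenvalues are those of the quotient matrix
\[
M_{a,b}=\begin{pmatrix} n+k-2 & a & b\\ k & k+2a-2 & 0\\ k & 0 & k+2b-2\end{pmatrix}.
\]
A short computation using $a+b=n-k$ shows that $n-2$ is always an eigenvalue of $M_{a,b}$ (so $n-2$ occurs altogether with multiplicity $k$), and the two genuinely new eigenvalues $\mu_\pm$ satisfy
\[
\mu_+ +\mu_- = 2n+k-4,\qquad \mu_+\mu_- = (k+2a-2)(k+2b-2)+k(k-2).
\]
Setting $a=1$ one recovers $\mu_\pm=n-2+\tfrac k2\pm\tfrac12\sqrt{(k-2n)^2+16(k-n+1)}$, so that $S_\alpha\bigl(K_k\vee(K_1\cup K_{n-k-1})\bigr)=b_\alpha(n,k)$ exactly; thus the theorem amounts to showing $S_\alpha(H_{a,b})$ is strictly largest at $a=1$.

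\emph{Optimisation.} Write $x_a=k+a-2$ and $x_b=k+b-2$, so $x_a+x_b=2(n-2)$ is fixed, $x_a\ge k-1$, and by the spectrum above
\[
S_\alpha(H_{a,b})=k(n-2)^\alpha+\bigl(h(x_a)+h(x_b)\bigr)+\bigl(\mu_+^\alpha+\mu_-^\alpha\bigr),\qquad h(t):=t^{\alpha+1}-(k-1)t^\alpha .
\]
Since $h''(t)=\alpha t^{\alpha-2}\bigl((\alpha+1)t-(k-1)(\alpha-1)\bigr)>0$ on $t\ge k-1$ for $\alpha\ge 1$, the function $h$ is strictly convex there, so with $x_a+x_b$ fixed the sum $h(x_a)+h(x_b)$ is largest when $x_a$ is as small as possible, i.e. $x_a=k-1$, i.e. $a=1$. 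For the last bracket, $\mu_+\mu_-$ is a product of factors of fixed sum $2(n-2)$ plus a constant, hence is smallest when one factor is as small as possible, i.e. at $a=1$; therefore the spread $\mu_+-\mu_-$ is largest at $a=1$, and since $\mu_++\mu_-$ is fixed and $t\mapsto t^\alpha$ is convex, $\mu_+^\alpha+\mu_-^\alpha$ is largest at $a=1$ as well. Both brackets peak strictly at $a=1$ (using $k\le n-2$), which yields $S_\alpha(G)\le b_\alpha(n,k)$ with equality if and only if $G=K_k\vee(K_1\cup K_{n-k-1})$.

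The step I expect to be the real work is the spectral computation: diagonalising $K_k\vee(K_a\cup K_b)$ and, above all, spotting the two identities $\mu_++\mu_-=2n+k-4$ (independent of $a,b$) and $\mu_+\mu_-=(k+2a-2)(k+2b-2)+k(k-2)$. It is exactly this structure that turns the extremal problem into the two one–line convexity comparisons above; the monotonicity reduction is routine but has to be run carefully enough to also force uniqueness of the extremal graph.
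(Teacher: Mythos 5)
Your proposal is correct, and it is worth noting at the outset that the paper itself does not prove this theorem: Theorem~\ref{2} is quoted from \cite{yy}, so there is no in-paper proof to compare against. What the paper does contain are precisely your first two ingredients. The reduction to the one-parameter family $K_k\vee(K_r\cup K_{n-k-r})$, $1\le r\le\lfloor(n-k)/2\rfloor$, is stated at the start of Section~3 and rests on Lemma~\ref{inter} (edge interlacing of signless Laplacian eigenvalues); your rank-one-perturbation-plus-Weyl argument is the same fact with the same consequence, and your explicit local moves correctly force the maximiser to be one of these join graphs. Your spectral computation agrees with Lemma~\ref{Kkr}: your identities $\mu_++\mu_-=2n+k-4$ and $\mu_+\mu_-=(k+2a-2)(k+2b-2)+k(k-2)$ are equivalent to the closed form $n-2+\frac{k}{2}\pm\frac{1}{2}\sqrt{(k-2n)^2+16r(k-n+r)}$ given there, and at $a=1$ they reproduce $b_\alpha(n,k)$ exactly. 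The convexity optimisation over $a$ is your own contribution and it checks out. Two small corrections: first, with $x_a=k+a-2$, $x_b=k+b-2$ and $a+b=n-k$ one has $x_a+x_b=n+k-4$, not $2(n-2)$ (you appear to have transcribed the sum $(k+2a-2)+(k+2b-2)=2(n-2)$ of the factors entering $\mu_+\mu_-$); this is harmless, since the argument only uses that $x_a+x_b$ does not depend on $a$. Second, the claim that \emph{both} brackets peak strictly at $a=1$ fails for $\alpha=1$, where $\mu_+^\alpha+\mu_-^\alpha=2n+k-4$ is constant in $a$; but the bracket $h(x_a)+h(x_b)$ is still strictly maximised at $a=1$ there (as $h(t)=t^2-(k-1)t$ is strictly convex), so the uniqueness of the extremal graph survives.
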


For the unsettled values of $\al$ in Theorems~\ref{1} and \ref{2}, they  made the following two conjectures.
\begin{conj}\label{BipPow}  {\rm(\cite{yy})}
Let $G$ be a bipartite graph with $n$ vertices. If $\alpha>1,$ then  $$S_{\alpha}(G)\leq n^\alpha+\left(\lfloor n/2\rfloor-1\right)\lceil n/2\rceil^\alpha+(\lceil n/2\rceil-1)\lfloor n/2\rfloor^\alpha,$$
with equality if and only if $G=K_{\lfloor n/2\rfloor,\lceil n/2\rceil}$.
\end{conj}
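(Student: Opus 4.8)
The plan is to \emph{disprove} Conjecture~\ref{BipPow} by exhibiting an (unbalanced) complete bipartite graph that violates the claimed bound. The one ingredient I would use is the standard fact that for a bipartite graph $G$ with bipartition $(X,Y)$, conjugating $Q(G)$ by the diagonal $\pm1$ matrix which is $+1$ on $X$ and $-1$ on $Y$ turns $Q(G)$ into the Laplacian $L(G)$; hence $Q(G)$ and $L(G)$ are similar, and $S_\al(G)$ equals the sum of the $\al$-th powers of the nonzero \emph{Laplacian} eigenvalues of $G$. Applied to $K_{r,s}$, whose Laplacian spectrum is $\{0,\ (r+s)^{(1)},\ r^{(s-1)},\ s^{(r-1)}\}$, this gives the closed form $S_\al(K_{r,s}) = (r+s)^\al + (s-1)r^\al + (r-1)s^\al$ with $n=r+s$. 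In particular the right-hand side of the conjectured inequality is exactly $S_\al(K_{\lfloor n/2\rfloor,\lceil n/2\rceil})$, so the conjecture asserts that the balanced complete bipartite graph maximizes $S_\al$ over all bipartite graphs on $n$ vertices for \emph{every} $\al>1$; to refute it, it suffices to beat it with an unbalanced complete bipartite graph.

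The candidate I would take is $K_{2,n-2}$ for $n\ge 6$, for which the formula above gives
$$S_\al(K_{2,n-2}) = n^\al + (n-3)\,2^\al + (n-2)^\al .$$
Comparing this with $S_\al(K_{\lfloor n/2\rfloor,\lceil n/2\rceil}) = n^\al + (\lfloor n/2\rfloor-1)\lceil n/2\rceil^\al + (\lceil n/2\rceil-1)\lfloor n/2\rfloor^\al$, the common dominant term $n^\al$ cancels, and everything hinges on the remaining terms. The point is that $S_\al(K_{2,n-2})$ carries the summand $(n-2)^\al$ whose base $n-2$ \emph{strictly exceeds} $\lceil n/2\rceil$ as soon as $n\ge 6$, whereas the two ``extra'' summands of the balanced graph are together at most $(n-2)\lceil n/2\rceil^\al$ (using $\lfloor n/2\rfloor^\al\le\lceil n/2\rceil^\al$). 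Since $\bigl((n-2)/\lceil n/2\rceil\bigr)^\al\to\infty$ as $\al\to\infty$, the single term $(n-2)^\al$ eventually dominates $(n-2)\lceil n/2\rceil^\al$, and hence $S_\al(K_{2,n-2}) > S_\al(K_{\lfloor n/2\rfloor,\lceil n/2\rceil})$ for all sufficiently large $\al$; equivalently, for fixed $n\ge 6$ the difference of the two quantities is a finite combination of functions $c^\al$ whose unique fastest-growing term is $+(n-2)^\al$, so it is positive for all large $\al$. I would also record one fully explicit instance, e.g.\ $n=6$, $\al=10$: $S_{10}(K_{2,4}) = 6^{10}+3\cdot 2^{10}+4^{10} = 61517824 > 60702372 = 6^{10}+4\cdot 3^{10} = S_{10}(K_{3,3})$.

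There is no genuine obstacle in the computation; the only subtlety worth spelling out is that the conjecture is \emph{not} false for all $\al>1$, so the counterexample must sit at $\al$ bounded away from $1$. Indeed $S_\al(G) = 2e(G) + (\al-1)\sum_i q_i\ln q_i + o(\al-1)$ as $\al\to 1^+$, and $2e(G)\le\lfloor n/2\rfloor\lceil n/2\rceil$ with equality only for $G=K_{\lfloor n/2\rfloor,\lceil n/2\rceil}$, while the correction term is $O(n^2\log n)$ uniformly; so for $\al$ in a small, $n$-dependent right neighbourhood of $1$ the balanced graph really is optimal. Thus the clean statement I would prove is: for every $n\ge 6$ there is a threshold $\al_n>1$ such that Conjecture~\ref{BipPow} fails for all $\al>\al_n$, the witness being $K_{2,n-2}$. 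The moral is simply that $S_\al$ is convex in the eigenvalues and, for large $\al$, rewards a few large eigenvalues (the eigenvalue $n-2$ of $K_{2,n-2}$) over many medium-sized ones (the $n-2$ eigenvalues near $n/2$ of $K_{\lfloor n/2\rfloor,\lceil n/2\rceil}$), a trade-off the conjecture overlooks.
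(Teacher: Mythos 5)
Your proposal is a correct disproof of Conjecture~\ref{BipPow}, and it is worth stressing that disproving it (for some range of $\al$) is exactly what the paper does, so refutation rather than proof is the right goal here; but your route is genuinely different from the paper's. Your ingredients check out: the spectrum you use for $K_{r,s}$ agrees with Lemma~\ref{QK_n}(ii) (whether derived via the $\pm1$ diagonal similarity between $Q$ and $L$ on bipartite graphs or directly), the right-hand side of the conjecture is indeed $S_\al(K_{\lfloor n/2\rfloor,\lceil n/2\rceil})$, the domination of the single summand $(n-2)^\al$ over $(n-2)\lceil n/2\rceil^\al$ once $n\ge6$ and $\al$ is large is valid, and the explicit instance $n=6$, $\al=10$, $S_{10}(K_{2,4})=61517824>60702372=S_{10}(K_{3,3})$ is arithmetically correct. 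The paper instead fixes $\al$ and lets $n\to\infty$: it shows $\zeta(n,\al)/n^{\al+1}\to p(\al):=\max_{0\le x\le1}\{x(1-x)^\al+(1-x)x^\al\}$ and then, by a third-order Taylor expansion of $x(1-x)^\al+(1-x)x^\al$ about $x=1/2$, that $p(\al)>2^{-\al}$ for every $\al>3$, while a separate monotonicity and concavity argument proves the conjecture \emph{true} for $1\le\al\le3$. What each approach buys: yours is more elementary and yields an immediately checkable numerical counterexample, but because you freeze the witness at $K_{2,n-2}$ and send $\al\to\infty$, your thresholds $\al_n$ (for $n=6$ the difference $3\cdot2^\al+4^\al-4\cdot3^\al$ only turns positive between $\al=4$ and $\al=5$) do not approach the true critical exponent $\al=3$; detecting failure for $\al$ just above $3$ requires witnesses $K_{r,n-r}$ with $r/n$ only slightly away from $1/2$ and $n$ large, which is precisely what the paper's optimization of $p(\al)$ captures, and the paper complements this with the positive result on $[1,3]$ that your argument cannot see. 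One small slip in your side remark: for a bipartite graph $G$ on $n$ vertices, $S_1(G)=2e(G)\le 2\lfloor n/2\rfloor\lceil n/2\rceil$, not $\le\lfloor n/2\rfloor\lceil n/2\rceil$ (the conjectured bound at $\al=1$ equals $2\lfloor n/2\rfloor\lceil n/2\rceil$); this does not affect your main argument, which stands.
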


\begin{conj}\label{k-con} {\rm(\cite{yy})}
Let $G$ be a graph with $n$ vertices and $\kappa(G)\le k$.
\begin{itemize}
  \item[\rm(i)] If $0<\alpha<1,$ then
$S_\alpha(G)\leq b_\alpha(n,k)$ with equality if and only if $G=K_k\vee(K_1\cup K_{n-k-1}).$
  \item[\rm(ii)] If $G$ is connected and $\alpha<0,$ then $S_\alpha(G)\geq b_\alpha(n,k)$ with equality if and only if $G=K_k\vee(K_1\cup K_{n-k-1}).$
\end{itemize}
\end{conj}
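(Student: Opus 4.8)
The plan is to refute Conjecture~\ref{k-con} by producing, for each of its two parts, an explicit graph $H$ with $\kappa(H)\le k$ that beats the prescribed extremal graph $G_0:=K_k\vee(K_1\cup K_{n-k-1})$. The point is that outside the range $\alpha\ge 1$ the quantity $S_\alpha$ is no longer governed chiefly by the number of edges: for $\alpha<0$ it is governed by the \emph{smallest} nonzero signless Laplacian eigenvalue, and for $0<\alpha<1$ (with $\alpha$ near $0$) by the \emph{number} of nonzero eigenvalues — and $G_0$ is far from optimal with respect to either.

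For part (ii) ($\alpha<0$): since $S_\alpha(G)=\sum_i q_i(G)^\alpha$ and the smallest term dominates, $S_\alpha(G)$ is asymptotic to $m\cdot q_r(G)^\alpha$ as $\alpha\to-\infty$, where $q_r(G)$ is the least nonzero $Q$-eigenvalue with multiplicity $m$; hence if $q_r(H)>q_r(G_0)$ then $S_\alpha(H)/S_\alpha(G_0)\to 0$, so $S_\alpha(H)<b_\alpha(n,k)$ for all sufficiently negative $\alpha$. From the closed form in Theorem~\ref{2} one reads off $q_r(G_0)=n-2+\tfrac k2-\tfrac12\sqrt{(k-2n)^2+16(k-n+1)}=:\mu$, and squaring the inequality $\mu<k$ reduces it to $0<8k$, so $\mu<k$ for every $k\ge 1$. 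It then suffices to exhibit a connected $H$ with $\kappa(H)\le k$ and $q_r(H)\ge k>\mu$. The clean instance is $k=1$ with $H=F_t$, the friendship graph on $n=2t+1$ vertices ($t\ge 2$): it is connected, $\kappa(F_t)=1$, and its $Q$-spectrum is $\{1^{(t)},\,3^{(t-1)},\,\lambda_+,\lambda_-\}$, where $\lambda_\pm$ are the eigenvalues of the $2\times 2$ quotient matrix of the partition (center / outer vertices) and both exceed $1$; so $q_r(F_t)=1>\mu$. (E.g.\ $n=5$, $H$ the bowtie, $\alpha=-3$ already gives $S_\alpha(H)<b_\alpha(5,1)$; for general $k$ one can take $H=K_{k-1}\vee F_t$, which has connectivity $k$ and, one checks, least $Q$-eigenvalue $k>\mu$.) This contradicts the claimed minimality of $G_0$, refuting part (ii).

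For part (i) ($0<\alpha<1$): as $\alpha\to 0^+$ we have $S_\alpha(G)\to S_0(G)=\mathrm{rank}\,Q(G)=n-(\#\text{ bipartite components of }G)$. In the case $k=0$ — included in the statement — the conjectured extremal graph is $G_0=K_1\cup K_{n-1}$, whose isolated vertex is a bipartite component, so $b_\alpha(n,0)=S_\alpha(G_0)\to n-1$. Taking instead $H=K_3\cup K_{n-3}$ with $n\ge 6$ gives $\kappa(H)=0\le k$ and no bipartite component, so $S_\alpha(H)\to n$; by continuity of $\alpha\mapsto S_\alpha$, $S_\alpha(H)>b_\alpha(n,0)$ for all sufficiently small $\alpha>0$, refuting part (i).

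The step I expect to be the main obstacle is verifying, in part (ii), that the least $Q$-eigenvalue of the competitor is exactly $k$ — i.e.\ that the eigenvalue $k$ forced by a ``sum-to-zero on one triangle'' eigenvector is genuinely the minimum, not merely an eigenvalue — and then turning the asymptotic comparison into an explicit negative value of $\alpha$ for which $S_\alpha(H)<b_\alpha(n,k)$. For the friendship-graph family both are routine once the full spectrum $\{1,3,\lambda_\pm\}$ is in hand, and for small parameters they can simply be checked by direct computation.
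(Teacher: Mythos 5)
Your part (ii) is correct and takes a genuinely different route from the paper. The paper's own refutation (its final Proposition, via Lemma~\ref{Kkr}) fixes $\alpha<-1$ and lets $n\to\infty$: for $H_n=K_k\vee(K_{(n-k)/2}\cup K_{(n-k)/2})$ one has $S_\alpha(H_n)\to0$ while $b_\alpha(n,k)\to k^\alpha>0$, because the smallest eigenvalue of the conjectured extremal graph tends to $k$ while every eigenvalue of $H_n$ tends to infinity. Your argument isolates the same mechanism --- the least $Q$-eigenvalue $\mu$ of $K_k\vee(K_1\cup K_{n-k-1})$ satisfies $\mu<k$ (your squaring computation reducing this to $0<8k$ is right, and sharpens the paper's asymptotic $\mu\to k$) --- but then sends $\alpha\to-\infty$ for fixed $n$ instead of $n\to\infty$ for fixed $\alpha$, using competitors (friendship graphs and their joins with cliques) whose least $Q$-eigenvalue is exactly $k$. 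Your spectrum $\{1^{(t)},3^{(t-1)},\lambda_\pm\}$ for $F_t$ is correct, and the bowtie instance checks out numerically: for $n=5$, $k=1$, $\alpha=-3$ one gets $S_{-3}(F_2)\approx2.38<4.33\approx b_{-3}(5,1)$. What the paper's version buys is a clean statement covering every $\alpha<-1$ uniformly; what yours buys is explicit small counterexamples and the exact inequality $\mu<k$ for all $n$. Neither resolves $-1\le\alpha<0$, and you should still verify the ``one checks'' step that $k$ really is the least $Q$-eigenvalue of $K_{k-1}\vee F_t$ before relying on the general-$k$ family (the explicit $k=1$ instance already suffices to refute part (ii) as stated, since the conjecture is universally quantified over $\alpha<0$).

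Part (i), however, has a genuine gap: your counting argument works only in the degenerate case $k=0$, where $K_0\vee(K_1\cup K_{n-1})=K_1\cup K_{n-1}$ has an isolated vertex and hence a zero $Q$-eigenvalue, so that $b_\alpha(n,0)\to n-1$ as $\alpha\to0^+$. For every $k\ge1$ the conjectured extremal graph $K_k\vee(K_1\cup K_{n-k-1})$ is connected and non-bipartite, all $n$ of its signless Laplacian eigenvalues are positive, and $S_\alpha$ of it tends to $n$ as $\alpha\to0^+$ --- the largest possible value of the eigenvalue count --- so the comparison with $K_3\cup K_{n-3}$ gives nothing. Since this is a conjecture about connectivity, the intended range is clearly $k\ge1$ (the paper even defines $\kappa$ only for connected graphs, and the $k=0$ join is degenerate), so exploiting $k=0$ does not refute part (i) in any meaningful sense. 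This is consistent with the paper itself, which refutes Conjecture~\ref{k-con} only for $\alpha<-1$ and explicitly leaves the whole range $-1\le\alpha\le1$ --- in particular all of part (i) --- open.
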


The purpose of this paper is to study these two conjectures. We prove the following results in this regard:
\begin{itemize}
\item For $\al>0$, we determine
$$\lim_{n\to\infty}\frac{\max\left\{S_{\alpha}(G)\mid \hbox{$G$ is a bipartite graph with $n$ vertices}\right\}}{n^{\al+1}}$$
from which it follows that Conjecture~\ref{BipPow} is not true for $\al>3$;
\item Conjecture~\ref{BipPow} is true for  $1\le\al\le3$;
\item Conjecture~\ref{k-con} is not true for $\al<-1$.
\end{itemize}
The validity of Conjecture~\ref{k-con} for $-1\le\al\le1$ remains open.

\section{Bipartite graphs}

In this section we study the asymptotic behavior of  the function
$$\zeta(n,\al):=\max\left\{S_{\alpha}(G)\mid \hbox{$G$ is a bipartite graph with $n$ vertices}\right\},$$
for $\al>0$. We start with the following well-known fact.
\begin{lem}\label{inter} {\rm(\cite[p. 222]{crsB})} Let $G$ be a graph and $e$ be an edge of that. Then the signless Laplacian eigenvalues of
 $G$ and $G'=G-e$ interlace:
$$q_1(G)\ge q_1(G')\ge q_2(G)\ge q_2(G')\ge\cdots\ge q_n(G)\ge q_n(G').$$
\end{lem}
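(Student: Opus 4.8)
The plan is to realize $Q(G)$ as a Gram matrix and view the deletion of an edge as a rank-one perturbation. Let $R=R(G)$ be the $n\times e(G)$ \emph{unsigned} vertex--edge incidence matrix, whose $(v,f)$ entry is $1$ if $v$ is an endpoint of the edge $f$ and $0$ otherwise. A direct computation of $RR^{\top}$ produces $d(v)$ in the $(v,v)$ position and, off the diagonal, the number of edges joining $u$ and $v$, which for a simple graph is precisely the $(u,v)$ entry of $A(G)$; hence $Q(G)=R(G)R(G)^{\top}$. If $e$ is an edge of $G$ and $r_e$ denotes the corresponding column of $R(G)$, then passing to $G'=G-e$ simply deletes that column, so $R(G')R(G')^{\top}=R(G)R(G)^{\top}-r_e r_e^{\top}$, that is,
\[
Q(G)\;=\;Q(G')+r_e r_e^{\top}.
\]
Thus $Q(G)$ is obtained from $Q(G')$ by adding a positive semidefinite matrix of rank one.

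It then remains to invoke (or prove) the classical fact that if $M$ is a symmetric matrix and $v$ a vector, then the eigenvalues of $M$ and of $M+vv^{\top}$, each listed in nonincreasing order, interlace as $\lambda_1(M+vv^{\top})\ge\lambda_1(M)\ge\lambda_2(M+vv^{\top})\ge\lambda_2(M)\ge\cdots$. I would deduce this from the Courant--Fischer min--max theorem. The inequalities $\lambda_k(M+vv^{\top})\ge\lambda_k(M)$ follow at once from the max--min formula together with $x^{\top}(M+vv^{\top})x=x^{\top}Mx+(v^{\top}x)^2\ge x^{\top}Mx$ for every $x$. For the reverse inequalities $\lambda_k(M)\ge\lambda_{k+1}(M+vv^{\top})$ I would use the min--max formula: letting $U$ be the span of eigenvectors of $M$ for $\lambda_k(M),\ldots,\lambda_n(M)$ (of dimension $n-k+1$) and intersecting it with the hyperplane $v^{\perp}$ yields a subspace of dimension at least $n-k$ on which $x^{\top}(M+vv^{\top})x=x^{\top}Mx\le\lambda_k(M)\,x^{\top}x$, and the min--max characterization of $\lambda_{k+1}(M+vv^{\top})$ then gives the bound.

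Applying this with $M=Q(G')$ and $v=r_e$, and recalling that $q_i(G)=\lambda_i(Q(G))$ and $q_i(G')=\lambda_i(Q(G'))$ in nonincreasing order, yields exactly the asserted chain $q_1(G)\ge q_1(G')\ge\cdots\ge q_n(G)\ge q_n(G')$. There is no serious obstacle; the only point needing a little care is the reverse family of inequalities in the rank-one interlacing, where one must exhibit a subspace of the correct dimension inside $v^{\perp}$ — a routine dimension count. Alternatively one can bypass Courant--Fischer entirely: the nonzero eigenvalues of $RR^{\top}$ and $R^{\top}R$ coincide, $R(G')^{\top}R(G')$ is the principal submatrix of $R(G)^{\top}R(G)$ obtained by deleting the row and column indexed by $e$, and Cauchy's interlacing theorem for principal submatrices then handles the nonzero spectrum, the zero eigenvalues being controlled separately via the known formula for their multiplicity.
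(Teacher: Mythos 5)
Your proof is correct; note that the paper itself gives no proof of this lemma, citing it as a known result from the reference on graph spectra. Your argument --- factoring $Q(G)=RR^{\top}$ via the unsigned incidence matrix so that edge deletion becomes the rank-one update $Q(G)=Q(G')+r_er_e^{\top}$, then applying rank-one interlacing via Courant--Fischer --- is exactly the standard proof found in such sources, and both it and your alternative via Cauchy interlacing on $R^{\top}R$ are sound.
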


The following lemma is easy to prove.
\begin{lem}\label{QK_n}
\begin{itemize}
  \item[\rm(i)] The signless Laplacian eigenvalues of $K_n$ are $2n-2$ with multiplicity $1$ and $n-2$ with multiplicity $n-1$.
  \item[\rm(ii)] The signless Laplacian eigenvalues of $K_{r,s}$ are $r+s$ with multiplicity $1$, $r$ with multiplicity $s-1$, $s$ with multiplicity $r-1$, and $0$ with multiplicity $1$.
\end{itemize}
\end{lem}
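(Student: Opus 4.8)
The plan is to read off both spectra directly from the matrix $Q=A+D$, exploiting the very rigid structure of these two graphs.

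For part~(i) I would use $A(K_n)=J-I$ and $D(K_n)=(n-1)I$, where $J$ is the all-ones matrix of order $n$, so that $Q(K_n)=J+(n-2)I$. The spectrum of $J$ is $n$ (multiplicity $1$, eigenvector $\mathbf 1$) and $0$ (multiplicity $n-1$, the hyperplane $\mathbf 1^{\perp}$), and adding $(n-2)I$ shifts every eigenvalue by $n-2$; this gives $2n-2$ once and $n-2$ with multiplicity $n-1$.

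For part~(ii) I would order the vertices so that the part of size $r$ comes first. Each vertex in that part has degree $s$ and each vertex in the other part has degree $r$, so $Q(K_{r,s})$ has block form $\left(\begin{smallmatrix} sI_r & J_{r\times s}\\ J_{s\times r}& rI_s\end{smallmatrix}\right)$, with $J_{p\times q}$ the $p\times q$ all-ones matrix. Then I would exhibit eigenvectors explicitly: for $x\in\mathbb R^{r}$ with $\mathbf 1_r^{\top}x=0$ the vector $(x^{\top},0^{\top})^{\top}$ has lower block $J_{s\times r}x=0$ and upper block $sx$, so it is an eigenvector for the eigenvalue $s$; this produces $r-1$ independent such eigenvectors, and symmetrically $s-1$ independent eigenvectors for $r$ supported on the other part. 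What is left is the $Q$-invariant plane spanned by the two part-indicator vectors $u_1,u_2$; since $J_{r\times s}\mathbf 1_s=s\mathbf 1_r$ and $J_{s\times r}\mathbf 1_r=r\mathbf 1_s$, the restriction of $Q$ to this plane is, in the basis $u_1,u_2$, the matrix $\bigl(\begin{smallmatrix}s&s\\r&r\end{smallmatrix}\bigr)$, with trace $r+s$ and determinant $0$, hence eigenvalues $r+s$ and $0$. Assembling these gives exactly the claimed list.

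I do not expect any real obstacle; the one point requiring a moment's care is checking that the exhibited vectors span all of $\mathbb R^{n}$, so that the listed multiplicities are exact rather than mere lower bounds --- but this is immediate from the count $(r-1)+(s-1)+2=n$, and one can cross-check the single zero eigenvalue against the fact, recalled in the introduction, that a connected bipartite graph contributes exactly one zero to the signless Laplacian spectrum. If one prefers not to manipulate blocks, an equivalent route is to use the quotient (divisor) matrix of the partition into the two parts to obtain the eigenvalues $r+s$ and $0$, the remaining eigenvalues being $s$ and $r$ on the subspaces of vectors summing to zero on each cell.
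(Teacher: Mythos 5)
Your proof is correct and complete: the identification $Q(K_n)=J+(n-2)I$ immediately gives part~(i), and for part~(ii) your eigenvectors supported on each part (orthogonal to the all-ones vector of that part) together with the $2\times2$ quotient on the span of the two part-indicators account for all $n=(r-1)+(s-1)+2$ eigenvalues with the exact multiplicities claimed. The paper itself gives no proof of this lemma (it is dismissed as ``easy to prove''), so there is nothing to compare against; your argument is the standard one, and your alternative phrasing via the quotient (divisor) matrix is exactly the equitable-partition technique the paper later formalizes in Lemmas~\ref{equi} and~\ref{Kkr}.
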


For the next theorem, we need Taylor Theorem which we recall here.
If the $k$-th derivative of a real function $f$ exists on an interval containing $a$ and $a+\e$, then there exists some $\eta$ between
$a$ and $a+\e$ such that
$$f(a+\e)=f(a)+f'(a)\e+\frac{f''(a)}{2!}\e^2+\cdots+\frac{f^{(k-1)}(a)}{(k-1)!}\e^{k-1}+\frac{f^{(k)}(\eta)}{k!}\e^k.$$

In the next theorem, we determine the asymptotic behavior of $\zeta(n,\al)$.  Noting that the upper bound given in Conjecture~\ref{BipPow} is $2^{-\al}n^{\al+1}+O(n^\al)$, the next theorem disproves Conjecture~\ref{BipPow} for $\al>3$.

\begin{thm} For any $\al>0$,
$$\lim_{n\to\infty}\frac{\zeta(n,\al)}{n^{\al+1}}=p(\al)$$
where
$$p(\al)=\max\{x(1-x)^\al+(1-x)x^\al\mid 0\le x\le1\}.$$
Furthermore, for any $\al>3$, we have $p(\al)>2^{-\al}$.
\end{thm}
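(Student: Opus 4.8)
The plan is to prove the limit by establishing matching asymptotic upper and lower bounds, and then to verify the final inequality $p(\al)>2^{-\al}$ for $\al>3$ by a direct analysis of the one-variable optimization. For the lower bound, I would simply plug in complete bipartite graphs: by Lemma~\ref{QK_n}(ii), for $G=K_{r,s}$ with $r+s=n$ one has
$$S_\al(K_{r,s})=(r+s)^\al+(s-1)r^\al+(r-1)s^\al=n^\al+(s-1)r^\al+(r-1)s^\al.$$
Writing $r=\lfloor xn\rfloor$, $s=n-r$, the dominant terms are $sr^\al+rs^\al$, and after dividing by $n^{\al+1}$ this tends to $x(1-x)^\al+(1-x)x^\al$ as $n\to\infty$ (the $n^\al$ term and the $-r^\al-s^\al$ corrections are $O(n^\al)=o(n^{\al+1})$). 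Taking $x$ to be (a rational approximation of) the maximizer shows $\liminf_{n\to\infty}\zeta(n,\al)/n^{\al+1}\ge p(\al)$.

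For the upper bound I would argue as follows. Any bipartite graph $G$ on $n$ vertices with parts of sizes $r$ and $s=n-r$ is a subgraph of $K_{r,s}$, so by the interlacing Lemma~\ref{inter} (applied repeatedly, one edge at a time) each nonzero signless Laplacian eigenvalue of $G$ is at most the corresponding eigenvalue of $K_{r,s}$; since $t\mapsto t^\al$ is increasing on $[0,\infty)$ for $\al>0$, this gives $S_\al(G)\le S_\al(K_{r,s})$. Hence $\zeta(n,\al)=\max_{0\le r\le n}S_\al(K_{r,n-r})$, and the same computation as above yields $\zeta(n,\al)/n^{\al+1}=\max_{r}\bigl[(r/n)(1-r/n)^\al+(1-r/n)(r/n)^\al\bigr]+O(1/n)$. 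As $n\to\infty$ the discrete maximum over $r/n\in\{0,1/n,\dots,1\}$ converges to the continuous maximum $p(\al)$ (the function $g(x)=x(1-x)^\al+(1-x)x^\al$ is continuous on the compact interval $[0,1]$, so uniformly continuous, and the mesh goes to zero). Combining the two bounds gives the stated limit. One subtlety to handle carefully here is the interlacing step: a bipartite graph need not contain all $rs$ edges of $K_{r,s}$, but deleting edges from $K_{r,s}$ to reach $G$ only decreases eigenvalues in the interlacing sense, and one should note that the count of nonzero eigenvalues can drop (edge deletion may create extra zero eigenvalues), which only helps the inequality $S_\al(G)\le S_\al(K_{r,s})$ since we are summing nonnegative $\al$-th powers.

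For the last assertion, fix $\al>3$. Since $g(1/2)=2\cdot\frac12\cdot(\frac12)^\al=2^{-\al}$, it suffices to show $x=1/2$ is \emph{not} the global maximizer, i.e. that $g$ takes a larger value somewhere. I would compute $g'(x)$ and evaluate at $x=1/2$: by symmetry $g(x)=g(1-x)$, so $g'(1/2)=0$ automatically, and the sign of $g''(1/2)$ decides whether $x=1/2$ is a local max or a local min. A direct differentiation gives $g''(1/2)=2^{2-\al}\al(\al-3)\cdot(\text{positive constant})$ — more precisely one finds that $g''(1/2)$ has the same sign as $\al-3$ — so for $\al>3$ the point $x=1/2$ is a strict local \emph{minimum} of $g$, whence $p(\al)=\max_{[0,1]}g>g(1/2)=2^{-\al}$. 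I expect the computation of $g''(1/2)$ (keeping track of the $\al^2$ versus $\al$ terms from differentiating $(1-x)^\al$ and $x^\al$ twice) to be the only place requiring genuine care; everything else is routine. The main conceptual obstacle is really just making the passage from the discrete maximum over integer part-sizes to the continuous maximum fully rigorous, which the uniform-continuity argument above handles.
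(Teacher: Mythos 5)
Your treatment of the limit is essentially the paper's own argument: interlacing (Lemma~\ref{inter}) reduces the maximum to the graphs $K_{r,n-r}$, Lemma~\ref{QK_n} gives $S_\al(K_{r,n-r})=n^\al+(n-r-1)r^\al+(r-1)(n-r)^\al$, and the discrete maximum over $r/n$ converges to $p(\al)$ by continuity of $f(x)=x(1-x)^\al+(1-x)x^\al$ on $[0,1]$; your side remark that edge deletion may create additional zero eigenvalues, which contribute $0^\al=0$ and so do no harm, is a point the paper leaves implicit. Where you genuinely diverge is the inequality $p(\al)>2^{-\al}$: the paper expands $f(1/2\pm\e)$ to third order with explicit Lagrange remainders $\eta_1,\eta_2$ and picks $\e$ small enough that the quadratic term $\left[\binom{\al}{2}-\al\right]\e^2(1/2)^{\al-2}$ dominates, whereas you apply the second derivative test at the symmetric critical point. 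Your computation is correct: $f'(1/2)=0$ by the symmetry $f(x)=f(1-x)$, and $f''(1/2)=2^{2-\al}\,\al(\al-3)$, which is positive precisely for $\al>3$; note that $\tfrac12 f''(1/2)\e^2$ equals the paper's quadratic coefficient, since $\binom{\al}{2}-\al=\al(\al-3)/2$, so the two arguments rest on the identical calculation. Since $f$ is smooth on $(0,1)$, the point $x=1/2$ is a strict local minimum and $p(\al)>f(1/2)=2^{-\al}$. Your version is the cleaner packaging; the paper's explicit remainder bookkeeping buys nothing additional here. Both proofs are valid.
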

\begin{proof}{For a bipartite graph $G$ with parts of sizes $r$ and $n-r$, by Lemma~\ref{inter}, we have $S_\al(G)\le S_\al(K_{r,n-r})$. Therefore the maximum occurs for some $K_{r,n-r}$, i.e. for any $n$ there exists some $r$ for which
$\zeta(n,\al)=S_\al(K_{r,n-r})$.
We now fix $\al$ and let $$f(x):=x(1-x)^\al+(1-x)x^\al.$$
By Lemma~\ref{QK_n},
\begin{align}
   S_\al(K_{r,n-r})&=n^\al+(r-1)(n-r)^\al+(n-r-1)r^\al\nonumber\\
   &=\left[\frac{r}{n}\left(1-\frac{r}{n}\right)^\al+\left(1-\frac{r}{n}\right)
   \left(\frac{r}{n}\right)^\al\right]n^{\al+1}+O(n^\al)\nonumber\\
   &=f\left(\frac{r}{n}\right)n^{\al+1}+O(n^\al)\label{SK}.
\end{align}
It follows that for large enough $n$,
\begin{equation}\label{<}
\frac{\zeta(n,\al)}{n^{\al+1}}\le p(\al)+o(1).
\end{equation}
Now we choose $0<b<1$ so that $f(b)=p(\al)$. Let $r_n=\lfloor bn\rfloor$. From (\ref{SK}), for large enough $n$ we have
\begin{align}\label{>}
\frac{\zeta(n,\al)}{n^{\al+1}}&\ge\frac{S_\al(K_{r_n,n-r_n})}{n^{\al+1}}\nonumber\\
&\ge f\left(\frac{\lfloor bn\rfloor}{n}\right)+o(1).
\end{align}
Combining (\ref{<}) and (\ref{>}), and then taking the limit, shows that $\lim_{n\to\infty}\zeta(n,\al)/n^{\al+1}$ exists and equals to $p(\al)$.

For the second part of the theorem, we fix $\al>3$.
Not that since $\al>3$, we have ${\al\choose2}-\al>0$. So we may choose $0<\e<1/2$ small enough so that
\begin{equation}\label{e2-e4}
\left[{\al\choose2}-\al\right]\e^2(1/2)^{\al-2} -2{\al\choose3}\e^4>0.
\end{equation}
We will show that  by this choice of $\e$, one has $f(1/2+\e)>f(1/2)=2^{-\al}$, and consequently $p(\al)>2^{-\al}$.

By applying Taylor Theorem for $f(x)$ with $k=3$ and $a=1/2$, there exit $\eta_1,\eta_2$ with
$\frac{1}{2}-\e<\eta_1<\frac{1}{2}<\eta_2<\frac{1}{2}+\e$ such that
\begin{align*}
   (1/2-\e)^\al&=(1/2)^\al-\al\e(1/2)^{\al-1}+{\al\choose2}\e^2(1/2)^{\al-2}-{\al\choose3}\e^3\eta_1^{\al-3},\\
   (1/2+\e)^\al&=(1/2)^\al+\al\e(1/2)^{\al-1}+{\al\choose2}\e^2(1/2)^{\al-2}+{\al\choose3}\e^3\eta_2^{\al-3}.
\end{align*}
It follows that
\begin{align*}
   f(1/2+\e)&=(1/2+\e)(1/2-\e)^\al+(1/2-\e)(1/2+\e)^\al\\
    &=(1/2)^\al+{\al\choose2}\e^2(1/2)^{\al-2}+{\al\choose3}\frac{\e^3}{2}(\eta_2^{\al-3}-\eta_1^{\al-3})
     -2\al\e^2(1/2)^{\al-1}-{\al\choose3}\e^4(\eta_1^{\al-3}+\eta_2^{\al-3}).
\end{align*}
Note that
\begin{align}
    {\al\choose2}\e^2(1/2)^{\al-2}-2\al\e^2(1/2)^{\al-1}&-{\al\choose3}\e^4(\eta_1^{\al-3}+\eta_2^{\al-3})\nonumber\\
    &=\left[{\al\choose2}-\al\right]\e^2(1/2)^{\al-2}-{\al\choose3}\e^4(\eta_1^{\al-3}+\eta_2^{\al-3}).\label{e2-e4eta}
\end{align}
As $\eta_1^{\al-3}+\eta_2^{\al-3}<2$, from (\ref{e2-e4}) it follows that  the right side of (\ref{e2-e4eta}) is positive.
This implies that $f(1/2+\e)>(1/2)^\al$, as desired.
}\end{proof}

\begin{thm} Conjecture~{\rm\ref{BipPow}} is true for $1\le\al\le3$.
\end{thm}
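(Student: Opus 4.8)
The plan is to reduce first to complete bipartite graphs and then to a one-variable inequality. As in the proof of the previous theorem, if $G$ is bipartite on $n$ vertices and $\{X,Y\}$ is a bipartition of $V(G)$ with $|X|=r$, then $G\subseteq K_{r,n-r}$, so deleting the edges of $K_{r,n-r}$ not in $G$ one at a time and using Lemma~\ref{inter} gives $q_i(G)\le q_i(K_{r,n-r})$ for every $i$, hence $S_\al(G)\le S_\al(K_{r,n-r})$ because $t\mapsto t^\al$ is increasing on $[0,\infty)$. Since each single edge addition strictly increases $\sum_iq_i=2e(\cdot)$ without decreasing any $q_i$, this inequality is strict unless $G=K_{r,n-r}$. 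For $\al=1$ the claim is immediate ($S_1(G)=2e(G)$ is maximized over bipartite graphs exactly by $K_{\lfloor n/2\rfloor,\lceil n/2\rceil}$), so assume $1<\al\le3$; it then suffices to show that $S_\al(K_{r,n-r})$, over integers $1\le r\le n-1$, is maximized only at $r\in\{\lfloor n/2\rfloor,\lceil n/2\rceil\}$, which give the same graph.

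By Lemma~\ref{QK_n}(ii), $S_\al(K_{r,n-r})=n^\al+h(r)$, where $h(x):=(x-1)(n-x)^\al+(n-x-1)x^\al$; regard $h$ as a function on $[1,n-1]$. As $h(n-x)=h(x)$, it is enough to prove $h$ is strictly increasing on $[1,n/2]$. Setting $s=n-x$ and $t=x/s$, differentiation and regrouping give
$$h'(x)=s^\al\,\phi(t)+\al\bigl(s^{\al-1}-x^{\al-1}\bigr),\qquad\phi(u):=1-\al u+\al u^{\al-1}-u^\al .$$
For $x\in(1,n/2)$ we have $s>x>0$, so the second summand is strictly positive, and everything reduces to showing $\phi(u)\ge0$ on $(0,1)$ for $1<\al\le3$. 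Note $\phi(1)=0$, $\phi(0)=1$ and $\phi'(1)=\al(\al-3)$; the last identity explains the cutoff, since for $\al>3$ the function $\phi$ is negative just to the left of $1$ and the bound fails.

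To prove $\phi\ge0$ on $(0,1)$ I would split on $\al$. For $\al=3$ this is immediate since $\phi(u)=(1-u)^3$. For $1<\al\le2$ it is also immediate: $\phi(u)=(1-u^\al)-\al u(1-u^{\al-2})$, and $u^{\al-2}\ge1$ on $(0,1)$, so $\phi(u)\ge1-u^\al>0$. The core case is $2<\al<3$. Here $\phi''(u)=\al(\al-1)u^{\al-3}(\al-2-u)$, so $\phi$ is convex on $(0,\al-2)$ and concave on $(\al-2,1)$. On $[0,\al-2]$ I claim $\phi'<0$: indeed $\phi'(u)/\al=-1+u^{\al-2}(\al-1-u)$, and the map $u\mapsto u^{\al-2}(\al-1-u)$ is increasing on $[0,\al-2]$ (its derivative is $(\al-1)u^{\al-3}(\al-2-u)\ge0$ there), so it does not exceed its value $(\al-2)^{\al-2}$ at $u=\al-2$, which is $<1$ since $x^x<1$ on $(0,1)$. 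Hence $\phi$ decreases on $[0,\al-2]$, so $\phi\ge\phi(\al-2)$ there; on $[\al-2,1]$ concavity gives $\phi\ge\min\{\phi(\al-2),\phi(1)\}=\min\{\phi(\al-2),0\}$. Thus the whole problem reduces to the single inequality $\phi(\al-2)\ge0$.

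For this last inequality, write $\beta=\al-2\in(0,1)$; a short computation gives $\phi(\al-2)=1-\beta^2-2\beta+2\beta^{\beta+1}=:F(\beta)$, with $F(1)=0$. Since
$$F'(\beta)=2(\beta+1)\bigl(\beta^\beta-1\bigr)+2\beta^{\beta+1}\ln\beta<0\qquad\text{for }0<\beta<1$$
(both terms being negative, as $\beta^\beta<1$ and $\ln\beta<0$), $F$ is strictly decreasing, whence $F(\beta)>F(1)=0$, i.e.\ $\phi(\al-2)>0$. This completes the proof that $\phi\ge0$ on $(0,1)$, hence that $h$ is strictly increasing on $[1,n/2]$, and together with the first paragraph it yields $S_\al(G)\le S_\al(K_{\lfloor n/2\rfloor,\lceil n/2\rceil})$ with equality iff $G=K_{\lfloor n/2\rfloor,\lceil n/2\rceil}$. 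I expect the hard part to be exactly the inequality $\phi(\al-2)\ge0$ for $2<\al<3$: this is the one genuinely tight estimate (it degenerates to equality as $\al\to3$, which is precisely why the bound breaks down for $\al>3$), and the trick is to package it as the monotonicity of $F$ rather than grind through a case analysis.
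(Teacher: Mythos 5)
Your proof is correct, and the overall skeleton matches the paper's: reduce to $K_{r,n-r}$ by edge-interlacing, write $S_\alpha(K_{r,n-r})=n^\alpha+g(r)$ with $g(x)=(x-1)(n-x)^\alpha+(n-x-1)x^\alpha$, and show $g$ is increasing on $(0,n/2]$, splitting into $1<\alpha\le2$ and $2<\alpha\le3$. Where you genuinely diverge is the hard range $2<\alpha\le3$. The paper stays with the variable $x$ and simply computes $g''(x)$, observing that $(n-x)^{\alpha-1}+x^{\alpha-1}>(x-1)(n-x)^{\alpha-2}+(n-x-1)x^{\alpha-2}$ and that $2\alpha\ge\alpha(\alpha-1)$ precisely when $\alpha\le3$; hence $g''<0$, $g'$ is decreasing, and $g'(x)\ge g'(n/2)=0$ --- about four lines. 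You instead normalize via $t=x/(n-x)$ to get the $n$-free inequality $\phi(u)=1-\alpha u+\alpha u^{\alpha-1}-u^\alpha\ge0$ on $(0,1)$, and then need a convexity/concavity split at $u=\alpha-2$ plus the auxiliary inequality $F(\beta)=1-\beta^2-2\beta+2\beta^{\beta+1}>0$, whose proof by monotonicity of $F$ is clean but adds a further layer. Your route is noticeably longer, but it buys two things the paper's does not make visible: the reduction to a single one-variable inequality independent of $n$, and the identity $\phi'(1)=\alpha(\alpha-3)$, which pinpoints exactly why the bound fails for $\alpha>3$ (consistent with the paper's asymptotic counterexample in the preceding theorem). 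Your explicit treatment of the equality case and of $\alpha=1$ is also slightly more careful than the paper's, which only establishes $g(r)\le g(\lfloor n/2\rfloor)$.
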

\begin{proof}{Let $$g(x):=(x-1)(n-x)^\al+(n-x-1)x^\al.$$
 Then $S_\al(K_{r,n-r})=n^\al+g(r)$.
We prove the theorem by showing that for $1\le\al\le3$ and for any $1\le r\le n-1$, $g(r)\le g(\lfloor n/2\rfloor)$.
Since $g(x)=g(n-x)$, we may assume that $1\le x\le n/2$. So it suffices to show that $g$ is increasing on the interval
$0< x\le n/2$.

We have
\begin{align*}
g'(x)&=(n-x)^\al-\al(x-1)(n-x)^{\al-1}-x^\al+\al(n-x-1)x^{\al-1}\\
  &=x^\al\left[\left(\frac{n}{x}-1\right)^\al-\al\left(1-\frac{1}{x}\right)\left(\frac{n}{x}-1\right)^{\al-1}-1+\al
  \left(\frac{n}{x}-1-\frac{1}{x}\right)\right].
\end{align*}
Since $n/x\ge2$, we see  $\frac{n}{x}-1-\frac{1}{x}\ge\left(1-\frac{1}{x}\right)\left(\frac{n}{x}-1\right)$.

First assume that $1<\al\le2$. So $\left(\frac{n}{x}-1\right)\ge\left(\frac{n}{x}-1\right)^{\al-1}$.
Therefore,
$$\frac{n}{x}-1-\frac{1}{x}\ge\left(1-\frac{1}{x}\right)\left(\frac{n}{x}-1\right)^{\al-1}.$$
This together with $(n/x-1)^\al\ge1$ imply that $g'(x)\ge0$ for $0<x\le n/2$ and so $g$ is increasing.

Next, assume that $2<\al\le3$. We have
$$g''(x)=-2\al\left[(n-x)^{\al-1}+x^{\al-1}\right]+\al(\al-1)\left[(x-1)(n-x)^{\al-2}+(n-x-1)x^{\al-2}\right].$$
Note that since $0<x\le n/2$,
$(n-x)^{\al-2}(n-2x+1)> x^{\al-2}(n-2x-1)$ which implies that $$(n-x)^{\al-1}-(x-1)(n-x)^{\al-2}>(n-x-1)x^{\al-2}-x^{\al-1}.$$
So we have  $$(n-x)^{\al-1}+x^{\al-1}>(x-1)(n-x)^{\al-2}+(n-x-1)x^{\al-2}.$$
Since $1<\al\le3$, $2\al>\al(\al-1)$ and so it follows that $g''(x)<0$ for $0<x\le n/2$.
Hence $g'$ is decreasing, and so $g'(x)\ge g'(n/2)=0$, and again we are done.
}\end{proof}

\section{Graphs with bounded connectivity}

In this section we consider $S_\al(G)$ for graphs $G$ with bounded connectivity and disprove Conjecture~\ref{k-con} for $\al<-1$.
Let $G$ be an $n$-vertex graph with $\kappa(G)\le k$.
Then $G$ must be a subgraph of one of  the graphs
$K_k\vee(K_r\cup K_{n-k-r})$ for some $r=1,\ldots,\lfloor(n-k)/2\rfloor$.
In view of Lemma~\ref{inter}, it follows that (as observed in \cite{yy}) the extremal values of $S_\al(G)$ correspond to one of the graphs
$K_k\vee(K_r\cup K_{n-k-r})$ for some $r\in\{1,\ldots,\lfloor(n-k)/2\rfloor\}$.
We first compute the signless Laplacian eigenvalues of these graphs.

For a graph $G$, consider a partition $P=\{V_1,\ldots,V_m\}$ of $V(G)$.
The partition of $P$ is {\em equitable} if each submatrix $Q_{ij}$ of $Q(G)$ formed by the
rows of $V_i$ and the columns of $V_j$ has constant row sums $r_{ij}$.
The $m\times m$ matrix $R=(r_{ij})$ is called the {\em quotient matrix}
of $Q(G)$ with respect to $P$. The proof of the following theorem is similar to the one given in \cite[p. 187]{crsB} where a similar result is presented for Laplacian matrix.

\begin{lem}\label{equi}
Any eigenvalue of the quotient matrix $R$ is an eigenvalue of $Q(G)$.
\end{lem}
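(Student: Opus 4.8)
The plan is to exhibit, for each eigenvector of the quotient matrix $R$, a corresponding eigenvector of $Q(G)$ with the same eigenvalue by ``lifting'' the quotient eigenvector to a vector on $V(G)$ that is constant on each block $V_i$ of the partition $P$. Concretely, suppose $Rv=\lambda v$ with $v=(v_1,\ldots,v_m)^{T}\neq 0$, and define $\x\in\mathbb{R}^{n}$ by setting the entry of $\x$ indexed by any vertex in $V_i$ equal to $v_i$. Since $v\neq 0$, some $v_i\neq0$ and hence $\x\neq 0$. The claim is that $Q(G)\x=\lambda\x$, which immediately gives that $\lambda$ is an eigenvalue of $Q(G)$.

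First I would verify the eigenvector equation block by block. Fix a vertex $u\in V_i$. The $u$-th coordinate of $Q(G)\x$ is $\sum_{w\in V(G)}Q(G)_{uw}\,\x_w=\sum_{j=1}^{m}\Big(\sum_{w\in V_j}Q(G)_{uw}\Big)v_j$, where we have grouped the sum according to which block $w$ lies in. By the definition of an equitable partition, for each $j$ the inner sum $\sum_{w\in V_j}Q(G)_{uw}$ is exactly the common row sum $r_{ij}$ of the submatrix $Q_{ij}$, and crucially this value does not depend on the choice of $u\in V_i$. Hence the $u$-th coordinate of $Q(G)\x$ equals $\sum_{j=1}^{m}r_{ij}v_j=(Rv)_i=\lambda v_i=\lambda\,\x_u$. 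Since $u\in V_i$ was arbitrary and $i$ was arbitrary, this shows $Q(G)\x=\lambda\x$ coordinatewise, completing the argument.

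I do not expect any serious obstacle here; the lemma is essentially the standard fact that the quotient matrix of an equitable partition has its spectrum contained in that of the ambient matrix, and the only thing to be careful about is bookkeeping with indices — making sure that ``constant row sums'' is invoked in exactly the form needed, namely that $\sum_{w\in V_j}Q(G)_{uw}=r_{ij}$ for \emph{every} $u\in V_i$. One small point worth noting is that $Q(G)$ need not be symmetric-in-the-relevant-sense with respect to $P$ (i.e., $R$ itself need not be symmetric even though $Q(G)$ is), so one should resist the temptation to use any symmetry of $R$; the one-directional statement ``eigenvalue of $R$ $\Rightarrow$ eigenvalue of $Q(G)$'' is all that is claimed and all that the lifting argument delivers. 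This mirrors the Laplacian version in \cite[p.~187]{crsB} verbatim with $Q(G)$ in place of the Laplacian.
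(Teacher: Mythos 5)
Your proof is correct: the lifting argument (extend an eigenvector of $R$ to a vector constant on each block $V_i$ and verify $Q(G)\x=\lambda\x$ row by row using the constant row sums $r_{ij}$) is exactly the standard argument, and it is the one the paper implicitly invokes by referring to the Laplacian analogue in \cite[p.~187]{crsB}. No gaps; the remark that only the one-directional inclusion of spectra is needed is a sensible precaution.
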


\begin{lem}\label{Kkr}
The signless Laplacian eigenvalues of  $K_k\vee(K_r\cup K_{n-k-r})$ for $1\le k\le n-2$ and $1\le r\le (n-k)/2$ are
$$(n-2)^{[k]},\, (k+r-2)^{[r-1]}, \,(n-r-2)^{[n-k-r-1]},\,
n-2+\frac{k}{2}\pm\frac{1}{2}\sqrt{(k-2n)^2+16r(k-n+r)},$$
where the exponents indicate multiplicities.
\end{lem}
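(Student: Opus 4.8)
The plan is to use the equitable partition machinery of Lemma~\ref{equi} together with a direct eigenvector analysis. Write $G = K_k\vee(K_r\cup K_{n-k-r})$ and let $s = n-k-r$ denote the size of the third block, so $1\le r\le s$ and $k+r+s = n$. Partition $V(G)$ into the three natural cells $V_1$ (the $K_k$ part), $V_2$ (the $K_r$ part), $V_3$ (the $K_s$ part). Every vertex in $V_1$ has degree $n-1$; every vertex in $V_2$ has degree $k+r-1$; every vertex in $V_3$ has degree $k+s-1 = n-r-1$. One checks that $P=\{V_1,V_2,V_3\}$ is equitable: the row sums $r_{ij}$ of the blocks of $Q(G)=A(G)+D(G)$ are constant, giving the quotient matrix
\[
R=\begin{pmatrix} (n-1)+(k-1) & r & s\\[2pt] k & (k+r-1)+(r-1) & 0\\[2pt] k & 0 & (n-r-1)+(s-1)\end{pmatrix}
=\begin{pmatrix} n+k-2 & r & s\\ k & k+2r-2 & 0\\ k & 0 & n+s-r-2\end{pmatrix}.
\]

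Next I would produce the $n-3$ eigenvalues coming from vectors that sum to zero on each cell. For such a vector $\x$, the off-diagonal "join" contributions vanish, so $Q(G)\x$ acts blockwise as $(A+D)$ restricted to each clique with the appropriate diagonal shift; on a clique $K_m$ sitting inside $G$ whose vertices have common degree $\delta$, the relevant operator on the zero-sum subspace is $A(K_m)+\delta I = (J-I)+\delta I$, which on zero-sum vectors equals $(\delta-1)I$. Applying this: on $V_1$ the common degree is $n-1$, giving eigenvalue $n-2$ with multiplicity $k-1$; on $V_2$ the common degree is $k+r-1$, giving eigenvalue $k+r-2$ with multiplicity $r-1$; on $V_3$ the common degree is $n-r-1$, giving eigenvalue $n-r-2$ with multiplicity $s-1 = n-k-r-1$. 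That is $k-1+r-1+s-1 = n-3$ eigenvalues, matching the multiplicities $(k+r-2)^{[r-1]}$ and $(n-r-2)^{[n-k-r-1]}$ in the statement, and accounting for $k-1$ of the $k$ copies of $n-2$.

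It remains to handle the orthogonal complement, the $3$-dimensional space of vectors constant on each cell, on which $Q(G)$ acts as $R^{\!\top}$ (or a diagonal rescaling thereof — I will be careful about which matrix, but by Lemma~\ref{equi} the three eigenvalues of $R$ are among the spectrum of $Q(G)$). So I must show the eigenvalues of $R$ are $n-2$ and $n-2+\tfrac{k}{2}\pm\tfrac12\sqrt{(k-2n)^2+16r(k-n+r)}$. The step I expect to be the main obstacle is verifying that $n-2$ is an eigenvalue of $R$ and pinning down the other two cleanly; the trick is to guess the eigenvector $(0,\,s,\,-r)^{\!\top}$ (or $(0,\,1,\,-1)$ after checking $k+2r-2$ versus $n+s-r-2 = k+2s-r-2$ — here I must be slightly careful since those diagonal entries are equal only when the degrees match, which they do not in general, so the right eigenvector for eigenvalue $n-2$ is $(0,\,s,\,-r)$ up to the rescaling by cell sizes). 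Granting one eigenvalue is $n-2$, the remaining two are the roots of the quadratic obtained by dividing $\det(xI-R)$ by $(x-(n-2))$; expanding $\det(xI-R)$, using $s = n-k-r$, and comparing with $x^2 - (2n-4+k)x + C$ gives the two roots $n-2+\tfrac{k}{2}\pm\tfrac12\sqrt{(k-2n)^2+16r(k-n+r)}$ after simplifying the discriminant $(2n-4+k)^2-4C$. This is a routine but somewhat delicate polynomial identity, and I would present it as a short verification, double-checking the $r\leftrightarrow s$ symmetry as a sanity check (the discriminant is symmetric in $r$ and $s$ since $r(k-n+r) = r\cdot(-s) = -rs = s(k-n+s)$). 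Finally, counting: $n-3$ eigenvalues from zero-sum vectors plus $3$ from the quotient gives $n$, and collecting the copies of $n-2$ (the $k-1$ from $V_1$ plus the one from $R$) yields the stated multiplicity $(n-2)^{[k]}$.
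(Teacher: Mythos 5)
Your overall strategy is sound and in one respect cleaner than the paper's. The paper likewise reduces to the quotient matrix of the equitable partition, but it obtains the remaining eigenvalues by observing that $Q(G)-(n-2)I$, $Q(G)-(k+r-2)I$ and $Q(G)-(n-r-2)I$ each have many identical rows (a rank/nullity argument); this only pins down $n-1$ eigenvalues with certainty, so the paper must finish with a trace computation to recover the last copy of $n-2$. Your direct construction of eigenvectors that sum to zero on each cell avoids that: these eigenvectors are automatically independent of the cell-constant lifts of the quotient eigenvectors, so the count $(k-1)+(r-1)+(s-1)+3=n$ closes with no trace argument. That part of your proposal is correct as written.

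There is, however, a concrete error in your treatment of the quotient matrix: $(0,s,-r)^{\top}$ is \emph{not} an eigenvector of $R$ for the eigenvalue $n-2$ unless $r=s$. Indeed, the second coordinate of $R(0,s,-r)^{\top}$ is $(k+2r-2)s$, which equals $(n-2)s$ only when $k+2r=n$, i.e.\ $r=s$; the ``rescaling by cell sizes'' you mention does not repair this. A correct eigenvector is $(s-r,\,k,\,-k)^{\top}$. Alternatively, verify directly that $\det\bigl(R-(n-2)I\bigr)=0$: using $n=k+r+s$, the matrix $R-(n-2)I$ has rows $(k,\,r,\,s)$, $(k,\,r-s,\,0)$, $(k,\,0,\,s-r)$, and subtracting the first row from each of the other two yields the proportional rows $(0,-s,-s)$ and $(0,-r,-r)$, so the determinant vanishes. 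Once $n-2$ is confirmed as a root, your plan of dividing the characteristic polynomial by $x-(n-2)$ and solving the remaining quadratic does produce $n-2+\frac{k}{2}\pm\frac{1}{2}\sqrt{(k-2n)^2+16r(k-n+r)}$, so the gap is localized and easily repaired.
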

\begin{proof}{Let $G=K_k\vee(K_r\cup K_{n-k-r})$. The partition of $V(G)$ into the vertex sets of the subgraphs
$K_k,K_r,K_{n-k-r}$ forms an equitable partition of $Q(G)$. The corresponding quotient matrix is
$$\left(\begin{array}{ccc}n+k-2&r&n-k-r\\k&2r+k-2&0\\k& 0&2(n-r-1)-k\end{array}\right),$$
with eigenvalues $n-2,\, n-2+\frac{k}{2}\pm\frac{1}{2}\sqrt{(k-2n)^2+16r(k-n+r)}$.

To determine the rest of the eigenvalues, note that in the matrices $Q(G)-(n-2)I$, $Q(G)-(k+r-2)I$ and $Q(G)-(n-r-2)I$, the rows corresponding to the vertices of $K_k$, $K_r$  and $K_{n-k-r}$, respectively, are identical. It follows that the nullities of the matrices
$Q(G)-(n-2)I$, $Q(G)-(k+r-2)I$ and $Q(G)-(n-r-2)I$, are at least $k-1$, $r-1$  and $n-k-r-1$, respectively.
Therefore $n-2$, $k+r-2$ and $n-r-2$ are eigenvalues of $Q(G)$ with multiplicities at least $k-1$, $r-1$  and $n-k-r-1$, respectively.
So far we have obtained $n-1$ eigenvalues of $Q(G)$.
To determine the remaining eigenvalue we use the fact that
the sum of all eigenvalues of $Q(G)$ equals $2e(G)$; it turns out that the remaining eigenvalue is also $n-2$. The proof is now complete.
}\end{proof}

The next proposition disproves  Conjecture~\ref{k-con} for  $\al<-1$.

\begin{pro} For any $\al<-1$, any positive integer $k$ and for large enough $n$, there exist $k$-connected graphs $G$ with $n$ vertices such that
 $S_\al(G)<b_\al(n,k)$.
\end{pro}
\begin{proof}{
Note that
$$\lim_{n\to\infty}\left(n-2+\frac{k}{2}-\frac{1}{2}\sqrt{(k-2n)^2+16(k-n+1)}\right)=k.$$
For $\al<-1$, the other terms of  $b_\al(n,k)$ tends to zero as $n\to\infty$. Hence
$$\lim_{n\to\infty}b_\al(n,k)=k^\al.$$
On the other hand, by Lemma~\ref{Kkr}, $S_\al\left(K_k\vee(K_{(n-k)/2}\cup K_{(n-k)/2})\right)$ equals to
$$k(n-2)^\al+\frac{1}{2}(n-k-2)(n+k-4)^\al+\left(n-2+\frac{k}{2}+\frac{1}{2}\sqrt{4kn-3k^2}\right)^\al+
\left(n-2+\frac{k}{2}-\frac{1}{2}\sqrt{4kn-3k^2}\right)^\al.$$
It is seen that for $\al<-1$,
$$\lim_{n\to\infty}S_\al\left(K_k\vee(K_{(n-k)/2}\cup K_{(n-k)/2})\right)=0.$$
This means that for any positive integer $k$ and for large enough $n$,
$$S_\al\left(K_k\vee(K_{(n-k)/2}\cup K_{(n-k)/2})\right)<b_\al(n,k).$$
}\end{proof}

\section*{Acknowledgments}
The author thanks Dr. B. Tayfeh-Rezaie for his comments on the manuscript.

\end{document}